\documentclass[reqno, 12pt]{amsart}
\usepackage[body={7in,9.5in},top=1in, left=0.8in ]{geometry}
\usepackage{enumerate}
\usepackage{enumitem}
\usepackage{cancel}
\usepackage{enumitem}
\usepackage{amssymb}
\newtheorem{theorem}{Theorem}[section]

\newtheorem{lemma}[theorem]{Lemma}

\newtheorem{corollary}[theorem]{Corollary}
\newtheorem{example}[theorem]{Example}

\newtheorem{df}[theorem]{Definition}

\theoremstyle{remark}
\newtheorem{remark}[theorem]{Remark}

\numberwithin{equation}{section}

\begin{document}
\title{On the Gromov product and its generalization in partial metric spaces}


\subjclass[2020]{Primary: 51F99; Secondary: 54E35, 54E99.}

\keywords{Best approximation, Chebyshev set, geodesic metric, Gromov product, hyperconvex metric space, midpoint mapping, partial metric space, radial metric, "river" metric.}

\begin{abstract}
The first aim of this article is to extend the notion of the Gromov product to the class of partial metric spaces. A relation between $\delta$-hyperbolicity of a partial metric space and $\delta$-hyperbolicity of metric spaces endowed with the metrics defined via that partial metric is also established. The second one is to analyze the Gromov product in the wide class of metric spaces in which metrics are defined via Chebyshev sets. Midpoint mappings in this class of spaces will be also examined. 
\end{abstract}

\author[D.Bugajewski]{Dariusz Bugajewski}
\author[K.Jarosi\'nski]{Kacper Jarosi\'nski}

\address[D. Bugajewski]{Department of Nonlinear Analysis and Applied Topology\\
  Faculty of Mathematics and Computer Science\\
  Adam Mickiewicz University\\
  Uniwersytetu Pozna\'nskiego 4\\
  61-614 Pozna\'n\\
  Poland} 
	
\address[K. Jarosi\'nski]{Faculty of Mathematics and Computer Science\\
  Adam Mickiewicz University\\
  Uniwersytetu Pozna\'nskiego 4\\
  61-614 Pozna\'n\\
  Poland}	

\email[D.~Bugajewski]{ddbb@amu.edu.pl}
\email[Kacper Jarosi\'nski]{kacjar1@st.amu.edu.pl}

\maketitle

\section{Introduction} 

In this paper we study one of the basic concepts of hyperbolic geometry, that is, the Gromov product introduced by Gromov in 1987 (see \cite{g}). \\
Firstly, we extend the Gromov product to the class of partial metric spaces introduced by Matthews in \cite{m}. Although this notion was introduced for the needs of computer science (denotational semantics of programming languages), it possesses very interesting pure mathematical properties. An essential difference between metrics and partial metrics is the fact that in the case of partial metric spaces a self-distance of a point to itself does not have to be equal to zero. Further, although these spaces are generally non-Hausdorff, many metric-like tools and concepts can be naturally extended to this setting. In particular, as it was shown in \cite{bmw}, in partial metric spaces compactness and sequential compactness are equivalent. Partial metric spaces have also found interesting applications in other areas, such as the geometry of normed spaces (see~\cite{rosp2}), the domain of words, and complexity spaces (see~\cite{rosp}). Let us emphasize also that the fixed point theory in partial metric spaces seems to be well developed with its similarities and differences in comparison to the classical metric fixed point theory (see e.g. \cite{bm}, \cite{bmw}, \cite{hrs} and \cite{ipr}). Our goal will be to define the Gromov product in the class of partial metric spaces and describe its core properties. \\
It is well-known that to every partial metric space $(U,p)$ one can naturally associate two metrics $p^m$ and $d_m$ (see Section 2 for the definitions). For example, the completeness of a partial metric space $(U,p)$ is equivalent to the completeness of the metric space $(U, p^m)$ or $(U, d_m)$ (see, for example,~\cite[p.~194]{m}). Moreover, one can use these metrics to define some kinds of hyperconvexity in partial metric spaces (see \cite{bko} for more details). Therefore, in Section 3 we compare the Gromov product in a partial metric space with the Gromov products associated with these two metrics. Moreover, we indicate a relation between $\delta$-hyperbolicity of a given partial metric space and $\delta$-hyperbolicities of metric spaces endowed with the metrics defined via that partial metric.\\
Secondly, we are going to analyze the Gromov product in a broad class of metric spaces in which metrics are defined via Chebyshev sets. For that, we will use the construction of metrics presented in the paper \cite{bbp}. Let us recall that the well-known metrics on the plane like the "river" metric or the radial metric are particular cases of that construction. In the first case the Chebyshev set is the $x$-axis while in the second case it is just the singleton $\{(0,0)\}$. Moreover, in connection with the notion of hyperconvexity introduced by Aronszajn and Panitchpakti in \cite{ap} let us recall that if the Chebyshev set generating the metric under consideration is hyperconvex, then the metric defined using it is hyperconvex as well (see \cite{bbp}, Th. 3.1). Our goal will be to give explicit formulae for the Gromov product in this class of spaces. \\
Recall that hyperconvex metric spaces are totally convex and therefore geodesic; moreover, they are complete (see e.g. \cite{ap}, \cite{bbk} or \cite{ek}). It is well known (see e.g. \cite{bbi}, Th. 1.64, p. 19 or \cite{mlg}, p. 5) that in the case of complete metric spaces, geodesicity is equivalent to the existence of a midpoint mapping. In Section 5 we give explicit formulae for a midpoint mapping in metric spaces defined via Chebyshev sets. The formula for a midpoint mapping given in \cite{orv} for the plane with the "river" metric is a special case of that general one. 
 
\section{Preliminaries}

In this section, we collect some key notions and results which will be needed in the sequel. 

Let $K$ be a nonempty subset of a real normed space $X$ and $x\in X$. We say that $y \in K$ is the best approximation to $x$ from $K$ if  
\[
\|x-y\| = inf\{\|x-z\|: z \in K \}. 
\]
The set $K$ is called to have the property $U_{x}$ if the best approximation to $x$ from $K$ is unique and $K$ is said to be a Chebyshev set if it possesses the property $U_{x}$ for every $x\in X$. 

Let $C\subset X$ be a Chebyshev subset of $X$. For any $x\in X$ denote by $x^p$ the best approximation from $x$ to $C$. In the next definition we use the following property of pairs of points $x, y \in X$: 
\begin{equation}\label{cl}
x^p=y^p \quad \mbox{and} \quad x^p, x, y \quad \mbox{are collinear}.
\end{equation} 

\begin{df}\label{cm} 
Let $C\subset X$ be a Chebyshev subset of a normed space $X$ and let $d_C$ be any metric defined on $C$. Define the function $d: X \times X \to [0, +\infty)$ by the formula:
\begin{equation}\label{dcc}
d(x,y) =
 \begin{cases} \|x-y\|, &\!\begin{aligned}
     &\text{if} \quad x, y \quad \text{satisfy the condition} \quad \eqref{cl},\\
 \end{aligned} \\[3ex]
  \|x-x^{p}\| + d_{C}(x^{p},y^{p}) + \|y^{p}-y\|, & \text{otherwise.} \end{cases}
\end{equation} 
\end{df}

It is easy to check that the above defined function $d$ is a metric. Moreover, as it was proven in \cite{bbp} if $(C, d_C)$, where $C$ is a Chebyshev subset of $X$ is hyperconvex, then the metric space $(X, d)$, where $d$ is the metric given in Definition \ref{cm}, is also hyperconvex. 

As it was mentioned in the Introduction, the well-known metrics in $\mathbb R^2$: the "river" metric or the radial metric are particular cases of metrics described in Definition \ref{cm}. For convenience of the reader let us recall the definitions of those two metrics. 

\begin{df}\label{rim} 
The function $d: \mathbb{R}^2\times\mathbb{R}^2\to [0, +\infty)$ defined by the formula:
 \[
d(v_1, v_2) =
\begin{cases}
|y_1 - y_2|, & \text{if } \quad x_1 = x_2, \\
|y_1| + |y_2| + |x_1 - x_2|, & \text{if} \quad x_1 \ne x_2,
\end{cases}
\]
where $v_i = (x_i, y_i) \in \mathbb{R}^2$ for $i = 1, 2$ is said to be the "river" metric.
\end{df}

\begin{df}\label{ram} 
The function $d: \mathbb{R}^2\times\mathbb{R}^2\to [0, +\infty)$ defined by the formula:
  \[
d(v_1, v_2) =
\begin{cases}
\rho(v_1,v_2), & \text{if} \quad  (0,0), v_1, v_2 \text{ are collinear,}\\
\rho(v_1,0)+\rho(v_2,0), & \text{otherwise},
\end{cases}
\]
where $v_i = (x_i, y_i) \in \mathbb{R}^2$ for $i = 1, 2$ and $\rho$ is the Euclidean metric in $\mathbb{R}^2$, is said to be the radial metric.
\end{df} 

The following four definitions need not to be recalled. We place them here just to fix notation. 

\begin{df}\label{gp}
Let $(X,d)$ be a metric space and let $x, y, z \in X$. The Gromov product of $x$ and $y$ at the point $z$ is defined as follows:
\[
(x, y)_{z} = \frac{1}{2}(d(x,z) + d(y,z) - d(x,y)). 
\]
\end{df} 

The point $z$ appearing in the above definition is said to be the reference point or the base point. 

\begin{df}\label{hy}
A metric space $X$ is said to be $\delta$-hyperbolic if for any points $x,y,z,w \in X$ the following "four-points condition" is satisfied, that is  
\[
(x,z)_{w} \geq min\{(x,y)_{w} , (y,z)_{w}\}-\delta. 
\]
\end{df} 

\begin{df}\label{geo} 
A metric space $X$ is said to be geodesic metric space if its any two points can be connected with a geodesic line, that is, for arbitrary $x,y\in X$ there exists an isometric embedding $\gamma\colon [a,b]\to \mathbb R$, $[a,b]\subset \mathbb R$, such that $\gamma(a)=x$ and $\gamma(b)=y$. 
\end{df} 

\begin{df}\label{mm} 
Let $(X,d)$ be any metric space. A mapping $m: X \times X \to X$ is said to be a midpoint mapping for $(X,d)$ if for arbitrary $x,y \in X$ it holds 
\[
d(m(x,y),x)=d(m(x,y),y)=\frac{1}{2}d(x,y).
\] 
In the case when it causes no confusion, we shortly write $m=m(x,y)$. 
\end{df}

At the end of this section we focus on partial metric spaces. 

\begin{df}\label{pm} 
Let $U$ be a nonempty set. A function $p: U\times U \to \mathbb{R_{+}} \cup \{0\}$ is said to be a partial metric if it satisfies the following conditions (for all $x,y,z \in U $): 
\begin{enumerate}
    \item $0\leq p(x,x)\leq p(x,y)$;
    \item if $p(x,x)=p(x,y)=p(y,y)$, then $x=y$;
    \item $p(x,y)=p(y,x)$ (symmetry);
    \item $p(x,z)\leq p(x,y)+p(y,z)-p(y,y)$ (Triangle Inequality). 
\end{enumerate} 
$p(x,x)$ is said to be the size of the point $x$. 
\end{df} 

To every partial metric space $(U,p)$ one can naturally associate two metrics $p^m$ and $d_m$ defined by
\begin{align*}
  p^m(x,y)&:=2p(x,y)-p(x,x)-p(y,y),\\
	d_m(x,y)&:=\max\bigl\{p(x,y)-p(x,x),\ p(x,y)-p(y,y)\bigr\}, 
\end{align*} 
for all $x,y\in U$. 

In what follows we will use the following example of a partial metric space given by Matthews. 

\begin{example}(\cite{m})\label{inte} 
Let $\mathcal I$ be the family of closed bounded intervals in $\mathbb{R_+}\cup\{0\}$, that is, $\mathcal I=\{[a,b] : a,b \in [0,+\infty) \wedge a<b\}$. Then the function $p: \mathcal I\times \mathcal I \to \mathbb{R_{+}} \cup \{0\}$ defined as follows:  
\[
p([a,b],[c,d])=\max\{b,d\}-\min\{a,c\}
\]
for any $a,b,c,d \in [0,+\infty)$ is an example of a partial metric space. 
\end{example} 

\section{The Gromov product in partial metric spaces}

In this section, unless indicated otherwise, $(U,p)$ will denote a partial metric space. We propose the following definition of the Gromov product in partial metric spaces. 

\begin{df}\label{gppm}
The Gromov product $(\cdot,\cdot)_z$, where $z\in U$, is defined as follows:
\[
(x,y)_z=\frac{1}{2}(p(x,z)+p(y,z)-p(z,z)-p(x,y)),
\]
for any $x,y \in U$.
\end{df} 

Analogously to the Definition \ref{hy} we can admit the following 

\begin{df}\label{hy2}
A partial metric space $U$ is said to be $\delta$-hyperbolic if for any points $x,y,z,w \in U$ the following "four-points condition" is satisfied, that is  
\[
(x,z)_{w} \geq min\{(x,y)_{w} , (y,z)_{w}\}-\delta. 
\]
\end{df}
 
The following lemma describes the basic properties of the Gromov product in partial metric spaces. 

\begin{lemma}
Let $x,y,z,u,v \in U$. Then we have:
\begin{enumerate}
\item[(a)] $(x,y)_{z}=(y,x)_{z}$,
\item[(b)] $(x,y)_{x}=(x,y)_{y}=0$,
\item[(c)] $0 \leqslant(x,y)_z \leq \min\{p(x,z),p(y,z)\}$,
\item[(d)] $|(x,y)_{u}-(x,y)_{v}| \leq p(u,v)$,
\item[(e)] $|(x,y)_{u}-(x,z)_{u}| \leq p(y,z)$.
\end{enumerate}
\end{lemma}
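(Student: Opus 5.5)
The plan is to verify each item directly from Definition \ref{gppm}, using only the axioms of Definition \ref{pm}. The key tool throughout is a rearranged form of the triangle inequality (4): for all $a,b,c\in U$,
\[
p(a,c)-p(a,b)\le p(b,c)-p(b,b).
\]
This follows from (4) and symmetry (3). No step is a genuine obstacle. The only point needing care is (d) and (e): there the inequality must be applied twice, with the correct choice of middle point each time, so that the leftover size terms come out with a nonpositive sign.

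For (a), symmetry of the definition in $x,y$ together with (3) gives the claim immediately. For (b), substitute $z=x$: the terms $p(x,x)$ and $-p(x,x)$ cancel, and $p(y,x)-p(x,y)=0$ by (3). The case $z=y$ is the same. For (c), nonnegativity of $(x,y)_z$ is exactly the triangle inequality $p(x,y)\le p(x,z)+p(z,y)-p(z,z)$. For the upper bound $(x,y)_z\le p(x,z)$, it suffices to show $p(y,z)\le p(x,z)+p(x,y)+p(z,z)$. This follows from $p(y,z)\le p(y,x)+p(x,z)-p(x,x)$ and $p(x,x),p(z,z)\ge 0$. The bound by $p(y,z)$ follows by symmetry (a).

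For (d), write
\[
2\bigl((x,y)_u-(x,y)_v\bigr)=[p(x,u)-p(x,v)]+[p(y,u)-p(y,v)]-p(u,u)+p(v,v).
\]
Apply the rearranged inequality to each bracket with middle point $v$. This bounds each bracket by $p(u,v)-p(v,v)$, so the whole expression is at most $2p(u,v)-p(u,u)-p(v,v)\le 2p(u,v)$. Interchanging $u$ and $v$ gives the other sign, hence the absolute value bound.

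For (e), write
\[
2\bigl((x,y)_u-(x,z)_u\bigr)=[p(y,u)-p(z,u)]+[p(x,z)-p(x,y)].
\]
The first bracket is at most $p(y,z)-p(z,z)$, via middle point $z$. The second is at most $p(y,z)-p(y,y)$, via middle point $y$. The sum is therefore at most $p^m(y,z)\le 2p(y,z)$. Swapping $y$ and $z$ gives the reverse inequality.

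As a byproduct, (d) and (e) hold in the sharper forms $|(x,y)_u-(x,y)_v|\le \tfrac12 p^m(u,v)$ and $|(x,y)_u-(x,z)_u|\le \tfrac12 p^m(y,z)$.
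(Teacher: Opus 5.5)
Your proof is correct and follows essentially the same route as the paper: you expand Definition \ref{gppm} directly, and in (d) and (e) you apply the partial-metric triangle inequality twice, with the appropriate middle points. The only difference is minor. In (e) you keep the size terms $-p(y,y)$ and $-p(z,z)$ that the paper discards, which gives the sharper bound $\tfrac12 p^m(y,z)$; the paper's own computations in (c) and (d) already produce the analogous sharper bounds $\tfrac12 p^m(x,z)$ and $\tfrac12 p^m(u,v)$ as intermediate steps.
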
 

\begin{proof}
The first two properties are obvious. 
\begin{enumerate}
\item[(c)] We have 
\[
(x,y)_z=\frac{1}{2}(p(x,z)+p(y,z)-p(z,z)-p(x,y)) \geqslant \frac{1}{2}(p(x,y)-p(x,y))=0. 
\]
Using the Triangle Inequality we get  
\[
p(y,z)-p(x,y) \leqslant p(x,z)-p(x,x), 
\]
so
\begin{align*}
    (x,y)_z = \frac{1}{2}(p(x,z)+p(y,z)-p(x,y)-p(z,z)) \leqslant p(x,z)-\frac{1}{2}\big(p(x,x)+p(z,z)\big) \leqslant p(x,z).
\end{align*}
Analogously we get $(x,y)_z \leqslant p(y,z)$, so  
\[
(x,y)_z \leqslant \min\{p(x,z),p(y,z)\}.
\] 
\item[(d)] We have to establish the following two inequalities:
\begin{equation}\label{czw}
\begin{split} 
&-p(u,v)\leqslant \frac{1}{2}(p(x,u)+p(y,u)-p(u,u)\\
&-p(x,v)-p(y,v)+p(v,v))
\leqslant p(u,v),
\end{split}
\end{equation}
Using the Triangle Inequality we get 
\begin{align*}
&\frac{1}{2}(p(x,u)+p(y,u)-p(u,u)-p(x,v)-p(y,v)+p(v,v)) \leqslant \\
&\frac{1}{2}(p(x,v)+p(v,u)-p(v,v)+p(y,v)+p(v,u)-p(v,v)\\
&-p(u,u)-p(x,v)-p(y,v)+p(v,v)) =\frac{1}{2}(2p(u,v)-p(v,v)-p(u,u))=\\
&=p(u,v)-\frac{1}{2}(p(v,v)+p(u,u)) \leqslant p(v,u). 
\end{align*} 
Further, we have 
\begin{align*}
&\frac{1}{2}(p(x,v)+p(y,v)-p(v,v)-p(x,u)-p(y,u)+p(u,u)) \leqslant\\
&\frac{1}{2}(p(x,u)+p(u,v)-p(u,u)+p(y,u)+p(u,v)-p(u,u)\\
&-p(v,v)-p(x,u)-p(y,u)+p(u,u)) =\frac{1}{2}(2p(u,v)-p(v,v)-p(u,u))=\\
&=p(u,v)-\frac{1}{2}(p(v,v)+p(u,u)) \leqslant p(v,u), 
\end{align*}
which ends the proof. 
\item[(e)] This time we have to establish the following two inequalities:
\begin{equation}\label{pi} 
-p(y,z)\leqslant \frac{1}{2}(p(y,u)-p(x,y)-p(z,u)+p(x,z))\leqslant p(y,z),
\end{equation}
By the Triangle Inequality, we get 
\begin{align*}
& \frac{1}{2}(p(y,u)+p(x,z)-p(x,y)-p(z,u))\leqslant\\
&\frac{1}{2}(p(y,z)+p(z,u)+p(x,y)+p(y,z)-p(x,y)-p(z,u))=p(y,z).
\end{align*}
Further, we have 
\begin{align*}
& \frac{1}{2}(p(x,y)+p(z,u)-p(y,u)-p(x,z))\leqslant\\
&\frac{1}{2}(p(x,z)+p(z,y)+p(z,y)+p(y,u)-p(y,u)-p(x,z))=p(y,z), 
\end{align*}
which ends the proof. 
\end{enumerate}
\end{proof} 

Now, we compare the Gromov product in a partial metric space $(U,p)$ with the Gromov products associated with the metrics $p^m$ and $d_m$. Denote by $(x,y)_z$ the Gromov product associated with the partial metric $p$, by $(x,y)_z^{p^m}$ - the Gromov product in the metric $p^m$, and by $(x,y)_z^{d_m}$ - the Gromov product in the metric $d_m$, where $x,y,z \in U$.  

\begin{lemma}
For any $x,y,z \in U$, it holds:
\begin{enumerate}
\item[(a)]
\[
(x,y)_z^{p^m}=2(x,y)_z;
\]
\item[(b)]
\[
(x,y)_z^{d_m}= \begin{cases}
    (x,y)_z+\frac{1}{2}(p(x,x)-p(z,z)), & \text{if }\quad p(z,z)\leqslant p(x,x) \leqslant p(y,y), \\
    (x,y)_z+\frac{1}{2}(p(z,z)-p(x,x)), & \text{if} \quad p(y,y) \leqslant p(x,x) \leqslant p(z,z),
\end{cases}
\]
\[
(x,y)_z^{d_m}= \begin{cases}
    (x,y)_z+\frac{1}{2}(p(y,y)-p(z,z)), & \text{if } \quad p(z,z)\leqslant p(y,y) \leqslant p(x,x), \\
    (x,y)_z+\frac{1}{2}(p(z,z)-p(y,y)), & \text{if } \quad  p(x,x) \leqslant p(y,y) \leqslant p(z,z),
\end{cases}
\]
\[
(x,y)_z^{d_m}=(x,y)_z \ \text{if } \ p(y,y) \leqslant p(z,z) \leqslant p(x,x) \ \text{or} \ p(x,x) \leqslant p(z,z) \leqslant p(y,y);  
\]
\item[(c)]
\[
(x,y)_z\leqslant (x,y)_z^{d_m} \leqslant (x,y)_z +\frac{1}{2} \max\{|p(z,z)-p(x,x)|, |p(z,z)-p(y,y)|\}.
\]
and 
\[
\frac{1}{2} (x,y)_z^{p^m}\leqslant (x,y)_z^{d_m} \leqslant \frac{1}{2}\left ((x,y)_z^{p^m}+ \max\{|p(z,z)-p(x,x)|, |p(z,z)-p(y,y)|\}\right). 
\] 
\end{enumerate}
\end{lemma}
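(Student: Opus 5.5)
The plan is a direct computation. Abbreviate the sizes by $a=p(x,x)$, $b=p(y,y)$, $c=p(z,z)$, and rewrite both associated metrics through $p$. For $p^m$ nothing beyond the definition is needed. For $d_m$, axiom (1) of Definition \ref{pm} gives $p(x,y)\geq \max\{a,b\}$, so the maximum in the definition is attained at the smaller size:
\[
d_m(x,y)=p(x,y)-\min\{a,b\}.
\]
Everything then reduces to comparing linear expressions in $p(x,z),p(y,z),p(x,y),a,b,c$.

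For (a), substitute $p^m(u,v)=2p(u,v)-p(u,u)-p(v,v)$ into Definition \ref{gp}. The expression inside the bracket is
\[
2p(x,z)-a-c+2p(y,z)-b-c-2p(x,y)+a+b .
\]
The terms $a$ and $b$ cancel, leaving $2\bigl(p(x,z)+p(y,z)-p(x,y)-c\bigr)$. Half of this is $p(x,z)+p(y,z)-c-p(x,y)$, which equals $2(x,y)_z$ by Definition \ref{gppm}.

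For (b), substitute the formula for $d_m$ and subtract $(x,y)_z$. The terms with mixed arguments cancel, and we obtain
\[
(x,y)_z^{d_m}-(x,y)_z=\tfrac12\bigl(c+\min\{a,b\}-\min\{a,c\}-\min\{b,c\}\bigr)=:E .
\]
It remains to evaluate $E$ on each of the six orderings of $a,b,c$.
\begin{itemize}
\item If $c\le a\le b$, the bracket is $c+a-c-c=a-c$.
\item If $b\le a\le c$, it is $c+b-a-b=c-a$.
\item The cases $c\le b\le a$ and $a\le b\le c$ are the same with $a$ and $b$ swapped.
\item If $c$ lies between $a$ and $b$, say $b\le c\le a$, it is $c+b-c-b=0$; the case $a\le c\le b$ is symmetric.
\end{itemize}
This gives exactly the five formulas in (b). The only care needed is consistent bookkeeping of which minimum is active in each case.

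For (c), read off from the list in (b) that $E\geq 0$ in every case. Moreover, $E$ is always either $0$ or one of $\tfrac12|c-a|$, $\tfrac12|c-b|$. This gives
\[
(x,y)_z\le (x,y)_z^{d_m}\le (x,y)_z+\tfrac12\max\{|c-a|,|c-b|\}.
\]
Substituting $(x,y)_z=\tfrac12(x,y)_z^{p^m}$ from (a) then yields the second chain of inequalities.

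I do not expect any real obstacle. The one step where an error could creep in is identifying $d_m$ as $p(x,y)$ minus the smaller size and then keeping the signs straight in the six-case evaluation of $E$.
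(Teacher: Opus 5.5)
Your proof is correct and follows essentially the same route as the paper: direct substitution for (a), a case split on the ordering of $p(x,x),p(y,y),p(z,z)$ for (b), and reading (c) off from (a) and (b). Rewriting $d_m(x,y)=p(x,y)-\min\{p(x,x),p(y,y)\}$ and computing the single closed-form difference $E$ streamlines the paper's expansion of the maxima case by case, and you check all six orderings where the paper checks three. One small remark: the identity $\max\{t-a,t-b\}=t-\min\{a,b\}$ holds for any real numbers, so you do not need axiom (1) for that rewriting.
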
 

\begin{proof}
\begin{enumerate} 
\item[(a)]
For all $x,y,z \in U$ we have 
    \begin{align*}
    &(x,y)_z^{p^m}=\frac{1}{2}(p^m(x,z)+p^m(y,z)-p^m(x,y))= \\
    &=\frac{1}{2}(2p(x,z)+2p(y,z)-2p(x,y)-2p(z,z))=2(x,y)_z.
    \end{align*} 
		\item[(b)]
For all $x,y,z \in U$ we have 
\begin{align*}
&(x,y)_z^{d_m}=\frac{1}{2}\big(d_m(x,z)+d_m(y,z)-d_m(x,y)\big)=\\
&\frac{1}{2}\big(\max\{p(x,z)-p(x,x), \ p(x,z)-p(z,z)\}+\max\{p(y,z)-p(y,y), \ p(y,z)-p(z,z)\}\\
&-\max\{p(x,y)-p(x,x), \ p(x,y)-p(y,y)\}\big).
\end{align*} 
Let us consider suitable subcases. 
\begin{enumerate}[label=(\roman*)]
\item If $p(z,z)\leqslant p(x,x) \leqslant p(y,y)$, then  
\begin{align*}
    &(x,y)_z^{d_m}=\frac{1}{2}(p(x,z)-p(z,z)+p(y,z)-p(z,z)-p(x,y)+p(x,x))=\\
    &=\frac{1}{2}(p(x,z)+p(y,z)-p(x,y)-p(z,z)+p(x,x)-p(z,z))= \\&=(x,y)_z+\frac{1}{2}(p(x,x)-p(z,z)).
\end{align*}
\item If $p(x,x)\leqslant p(y,y) \leqslant p(z,z)$, then  
\begin{align*}
    &(x,y)_z^{d_m}=\frac{1}{2}(p(x,z)-p(x,x)+p(y,z)-p(y,y)-p(x,y)+p(x,x))=\\
    &=\frac{1}{2}(p(x,z)+p(y,z)-p(x,y)-p(z,z)+p(z,z)-p(y,y))= \\&=(x,y)_z+\frac{1}{2}(p(z,z)-p(y,y)).
\end{align*}
\item If $p(x,x)\leqslant p(z,z) \leqslant p(y,y)$, then 
\begin{align*}
    &(x,y)_z^{d_m}=\frac{1}{2}(p(x,z)-p(x,x)+p(y,z)-p(z,z)-p(x,y)+p(x,x))=\\
    &=\frac{1}{2}(p(x,z)+p(y,z)-p(x,y)-p(z,z))= \\&=(x,y)_z.
\end{align*} 
The other three subcases are analogous to the above ones. 
\item[(c)] It is an obvious consequence of items (a) and (b). 
\end{enumerate}
\end{enumerate}
\end{proof} 

Using the above lemma one can infer about $\delta$-hyperbolicity of metric spaces in which the metrics are defined via a given partial metric.  

\begin{corollary}\label{hyb}
Let $(U,p)$ be a $\delta$-hyperbolic partial metric space. Then: 
\begin{enumerate}
\item[(a)] $(U,p^m)$ is $2\delta$-hyperbolic;
\item[(b)] assuming additionally that the sizes of all points $x\in U$ are bounded, then $(U,d_m)$ is $s+\delta$-hyperbolic, where $s=\sup\limits_{x\in U} p(x,x)$. 
\end{enumerate}
\end{corollary}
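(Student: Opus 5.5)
The plan is to derive both parts directly from the previous lemma by substituting its comparisons into the four-points condition of Definition \ref{hy2}.

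\emph{Part (a).} Fix $x,y,z,w\in U$. By item (a) of the previous lemma, $(\cdot,\cdot)^{p^m}_w=2(\cdot,\cdot)_w$. Multiply the hypothesis $(x,z)_w\geq\min\{(x,y)_w,(y,z)_w\}-\delta$ by $2$. Since multiplying by $2$ commutes with taking the minimum, this gives
\[
(x,z)^{p^m}_w\geq \min\{(x,y)^{p^m}_w,(y,z)^{p^m}_w\}-2\delta,
\]
which is exactly the $2\delta$-hyperbolicity of $(U,p^m)$.

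\emph{Part (b).} Set $s=\sup_{x\in U}p(x,x)<\infty$. All sizes lie in $[0,s]$, so $|p(w,w)-p(u,u)|\leq s$ for all $u,w\in U$. Item (c) of the previous lemma then gives, for every $u,v,w$,
\[
(u,v)_w\leq (u,v)^{d_m}_w\leq (u,v)_w+\tfrac{s}{2}.
\]
We chain these bounds as follows:
\[
(x,z)^{d_m}_w\geq (x,z)_w\geq \min\{(x,y)_w,(y,z)_w\}-\delta\geq \min\{(x,y)^{d_m}_w-\tfrac s2,(y,z)^{d_m}_w-\tfrac s2\}-\delta .
\]
The right-hand side equals $\min\{(x,y)^{d_m}_w,(y,z)^{d_m}_w\}-\tfrac s2-\delta$. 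This yields $(s/2+\delta)$-hyperbolicity, which is stronger than the claimed $(s+\delta)$-hyperbolicity, since $\delta'$-hyperbolicity implies $\delta''$-hyperbolicity whenever $\delta''\geq\delta'$.

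\emph{Expected obstacle.} No step is genuinely hard; the only care needed is in the direction of each inequality. In part (b) we must use the lower bound $(\cdot,\cdot)\leq(\cdot,\cdot)^{d_m}$ on the left-hand side and the upper bound on the terms inside the minimum. We must also check that $\max\{|p(z,z)-p(x,x)|,|p(z,z)-p(y,y)|\}\leq s$, which holds because sizes are nonnegative and bounded by $s$. If one prefers to match the constant in the statement exactly, one can simply weaken $s/2$ to $s$ at the end.
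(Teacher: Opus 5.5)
Your proof is correct and follows the paper's route: the paper only says the corollary is an obvious consequence of items (a) and (c) of the preceding lemma, and your chain of inequalities fills in exactly that argument. You are also right that the argument actually gives $(s/2+\delta)$-hyperbolicity of $(U,d_m)$, which implies the stated $(s+\delta)$ bound.
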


\begin{proof} 
It is an obvious consequence of the above lemma.
\end{proof}
Now, we illustrate the Gromov product in partial metric spaces considerning the following 

\begin{example}\label{ipm}
Let $(\mathcal I,p)$ be the partial metric space defined in Example \ref{inte}. Then the Gromov product can be described by the following formulae: 
\begin{align*}   
&([a,b],[c,d])_{[e,f]}=\frac{1}{2}(p([a,b],[e,f])+p([c,d],[e,f])+\\
&-p([a,b],[c,d])-p([e,f],[e,f]))=\\
&=\text{max}\{b,f\}-\text{min}\{a,e\}+\text{max}\{d,f\}-\text{min}\{c,e\}+\\
&-\text{max}\{b,d\}+\text{min}\{a,c\}-f+e.
\end{align*}
For example, let us consider the intervals in the following setting:
\begin{equation} \label{nr1}
[e,f] \subset [c,d] \subset [a,b]. 
\end{equation}
Then 
\[
([a,b],[c,d])_{[e,f]}=\frac{1}{2}(b-a+d-c-b+a-f+e)=\frac{1}{2}(e-c+d-f).
\]
Let us notice that in this case the Gromov product depends only on the intervals $[c,d]$ and $[e,f]$.\\
Now, let us consider the intervals in the following setting: 
\begin{equation}\label{nr2} 
[c,d] \subset [e,f] \subset [a,b] ,
\end{equation}
which means that the base point changes essentially. Then 
\[
([a,b],[c,d])_{[e,f]}=\frac{1}{2}(b-a+f-e-b+a-f+e)=0.
\]
\end{example} 

\section{The Gromov product in metric spaces involving Chebyshev sets}

In the next proposition, not losing the main idea, we will assume that the set $C$ appearing in the Definition \ref{cm} is the closed ball centered at zero and contained in a normed space $X$. 

\begin{theorem}
The Gromov product defined in the metric space described in the Definition \ref{cm} can be expressed by the following formulae: 
\begin{enumerate}   
\item[(a)] if the points $x^p,y^p,z^p$ are different, then 
\[
(x,y)_z=\|z-z^p\|+(x^p,y^p)^{C}_{z^p},
\]
where $(x^p,y^p)^{C}_{z^p}$ denotes the Gromov product defined via the metric on the Chebyshev set $C$; 
\item[(b)] if $x^p=z^p$ and $y^p\neq x^p$, then 
\[
(x,y)_z=\|x-z\|,
\] 
if $\|z-z^p\|>\|x-x^p\|$; otherwise it is equal to $0$;
\item[(c)] if $y^p=z^p$ and $y^p\neq x^p$, then analogously to the previous case 
\[
(x,y)_z=\|y-z\|,
\]
if $\|z-z^p\|>\|y-y^p\|$; otherwise it is equal to $0$;
\item[(d)] if $x^p=y^p$ and $z^p\neq x^p$, then 
\[
(x,y)_z=\|z-z^p\|+d_C(x^p,z^p)+\|x-x^p\|=d(x,z),
\]
if $\|y-y^p\|>\|x-x^p\|$; otherwise 
\[
(x,y)_z=\|z-z^p\|+d_C(y^p,z^p)+\|y-y^p\|=d(y,z).
\] 
\item[(e)] if $x^p=y^p=z^p$, then
\begin{equation*}
(x,y)_z=\frac{1}{2}(\|x-z\|+\|y-z\|-\|x-y\|).
\end{equation*}
\end{enumerate}
\end{theorem}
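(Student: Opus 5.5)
The proof is a direct case-by-case evaluation of the definition of the Gromov product (Definition \ref{gp}) with the metric $d$ from Definition \ref{cm}. Write $a_x=\|x-x^p\|$, and similarly $a_y$, $a_z$. The first step is a geometric reduction that uses the special choice of $C$ as the closed ball $\overline B(0,r)$. We need that whenever $x^p=y^p$, the condition \eqref{cl} holds automatically, and then $\|x-y\|=|a_x-a_y|$. I would get this by identifying the fibre of the projection: $x^p=x$ for $\|x\|\le r$, and $x^p=r\,x/\|x\|$ otherwise. Hence the set of points with a given projection $c\in C$ is $\{c\}$ if $\|c\|<r$, and the outward ray $\{tc: t\ge 1\}$ if $\|c\|=r$.

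Along such a ray, $x^p$, $x$ and $y$ are collinear, and $\|x-y\|=|a_x-a_y|$ because $x-x^p$ and $y-y^p$ are nonnegative multiples of the same vector. Consequently
\[
d(x,y)=|a_x-a_y| \quad\text{if } x^p=y^p, \qquad d(x,y)=a_x+d_C(x^p,y^p)+a_y \quad\text{otherwise}.
\]
This is the step I expect to be the main obstacle. The formula $x^p=r\,x/\|x\|$, and indeed the Chebyshev property of the ball, genuinely depends on the norm; for instance it holds in strictly convex spaces. So I would state explicitly that we work in the setting where the ball is Chebyshev with this radial projection, which is what the remark ``not losing the main idea'' before the theorem allows.

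With the reduction in hand, each item is a two-line computation.

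In (a) all three distances are of the second type. The terms $a_x$ and $a_y$ cancel and $a_z$ appears twice, giving
\[
(x,y)_z=a_z+\tfrac12\big(d_C(x^p,z^p)+d_C(y^p,z^p)-d_C(x^p,y^p)\big)=\|z-z^p\|+(x^p,y^p)^C_{z^p}.
\]

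In (b) we have $d(x,z)=|a_x-a_z|$, while $d(y,z)$ and $d(x,y)$ are of the second type and share the term $a_y+d_C(y^p,x^p)$, which cancels. This gives $(x,y)_z=\tfrac12(|a_x-a_z|+a_z-a_x)$. That equals $a_z-a_x=\|x-z\|$ when $a_z>a_x$, and equals $0$ otherwise. Case (c) follows from (b) by the symmetry $(x,y)_z=(y,x)_z$.

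In (d) we have $d(x,y)=|a_x-a_y|$, and the other two distances are of the second type with the same $d_C(x^p,z^p)$. Hence
\[
(x,y)_z=a_z+d_C(x^p,z^p)+\tfrac12(a_x+a_y-|a_x-a_y|)=a_z+d_C(x^p,z^p)+\min\{a_x,a_y\}.
\]
This is $d(x,z)$ when $a_y>a_x$ and $d(y,z)$ otherwise, using $x^p=y^p$.

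Finally, in (e) all three pairs satisfy \eqref{cl}, so $d$ coincides with the norm distance and the formula is immediate.
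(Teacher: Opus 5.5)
Your proposal is correct and follows the paper's proof: the same case-by-case substitution of the metric from Definition \ref{cm} into the Gromov product. The only addition is that you state explicitly a fact the paper uses silently in cases (b), (d) and (e): for a ball of positive radius, $x^p=y^p$ forces condition \eqref{cl} together with $\|x-y\|=\bigl|\|x-x^p\|-\|y-y^p\|\bigr|$.

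The step you flag as the main obstacle is in fact automatic. By the triangle inequality $rx/\|x\|$ is always a nearest point of the ball, so it is the best approximation once $C$ is Chebyshev, as Definition \ref{cm} assumes. The hypothesis $r>0$, on the other hand, is genuinely needed, as the paper's remark on the radial metric shows.
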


\begin{proof}
\begin{enumerate}   
\item[(a)] We have 
\begin{align*}
    (x,y)_z
    &=\frac{1}{2}(d(x,z)+d(y,z)-d(x,y))=\\
    &=\frac{1}{2}(\|x-x^p\|+d_C(x^p,z^p)+\|z-z^p\|+\|y-y^p\|+d_C(y^p,z^p)+\|z-z^p\|\\
    &-\|x-x^p\|-d_C(x^p,y^p)-\|y-y^p\|)=\\
    &=\|z-z^p\|+\frac{1}{2}(d_C(x^p,z^p)+d_C(y^p,z^p)-d_C(x^p,y^p))
    =\|z-z^p\|+(x^p,y^p)^{C}_{z^p}.
    \end{align*}
\item[(b)] We have 
\begin{align*}
    (x,y)_z
    &=\frac{1}{2}(\|x-z\|+\|y-y^p\|+d_C(y^p,z^p)+\|z-z^p\|\\
    &-\|y-y^p\|-d_C(y^p,x^p)-\|x-x^p\|)\\
		&=\frac{1}{2}(\|x-z\|+\|z-z^p\|-\|x-z^p\|).
    \end{align*}
Now, there are two possibilities. First, if $\|z-z^p\|>\|x-z^p\|$, then:
\[
(x,y)_z=\|x-z\|.
\]
Otherwise $(x,y)_z=0$. 
\item[(c)] The proof of this case is analogous to the proof of case (b). 	
\item[(d)] We have 
    \begin{align*}
    (x,y)_z
    &=\frac{1}{2}(\|x-x^p\|+d_C(x^p,z^p)+\|z-z^p\|\\
    &+\|y-y^p\|+d_C(y^p,z^p)+\|z-z^p\|-\|x-y\|)=\\
    &=\|z-z^p\|+d_C(x^p,z^p)+\frac{1}{2}(\|x-x^p\|+\|y-y^p\|-\|x-y\|).
    \end{align*}
Arguing similarly as in the case (b) we infer that if $\|y-y^p\|>\|x-x^p\|$, then 
    \[
    (x,y)_z=\|z-z^p\|+d_C(x^p,z^p)+\|x-x^p\|=d(x,z);
    \]
otherwise, we get: 
    \[
    (x,y)_z=\|z-z^p\|+d_C(x^p,z^p)+\|y-y^p\|=d(y,z).
    \]
\item[(e)] We have 
\[
(x,y)_z=\frac{1}{2}(d(x,z)+d(y,z)-d(x,y))=\frac{1}{2}(\|x-z\|+\|y-z\|-\|x-y\|)
\]
Because $x^p=y^p=z^p$, so:
\begin{itemize}
\item $(x,y)_z=\|x-z\|$, if $x$ is between $y$ and $z$,
\item $(x,y)_z=\|y-z\|$, $y$ is between $x$ and $z$,
\item $(x,y)_z=0$, $z$ is between $x$ and $y$.
\end{itemize}
\end{enumerate}
\end{proof}

\begin{remark}
Let us notice that in the case of the radial metric on the plane, $C=\{(0,0)\}$. Obviously, in the case $x^p=y^p=z^p$, however despite of that fact, one should consider the issue of collinearity of points ${x,y,z}$. We will illustrate it in the next example. 
\end{remark}

\begin{example}
Let us consider $\mathbb{R}^2$ with the radial metric and $C=\{(0,0)\}$. In this situation let us consider a particular case, when $x,x^p,z\in \mathbb{R}^2$ are not collinear, $y,y^p,z\in \mathbb{R}^2$ are not collinear and $x,x^p,y\in \mathbb{R}^2$ are also not collinear. Then we have:
\begin{align*}
    &(x,y)_z= \frac{1}{2}(\rho(x,0)+\rho(z,0)+\rho(y,0)+\rho(z,0)-\rho(x,0)-\rho(y,0))=\\
    &= \rho(z,0),
\end{align*}
Similarly, $(x,z)_y=\rho(y,0)$ and $(y,z)_x=\rho(x,0)$. Hence, in this particular case the Gromov product $(x,y)_z$ is equal to the Euclidean distance of the reference point to the singleton $\{(0,0)\}$.
\end{example} 

Now, let us consider a particular example in which the set $C$ appearing in the Definition \ref{cm} is not a closed ball centered at zero and contained in a normed space $X$

\begin{example}
Let us consider the space $(\mathbb{R}^2,d)$ with the "river" metric and $C=\{(x,y) \in \mathbb{R}^2: x\in\mathbb{R},\ y=0\}$. Let us consider a particular case in which $x,x^p,z\in \mathbb{R}^2$ are collinear and $y,y^p,z\in \mathbb{R}^2$ are not collinear. Then the points $x,x^p,y\in \mathbb{R}^2$ are not collinear, too. Denoting $x=(x_1,x_2), y =(y_1,y_2), z =(z_1,z_2), x^p=z^p=(x^p_1,x^p_2), y^p=(y^p_1,y^p_2)$ and assuming additionally that $x_2>z_2>0$, we get 
\begin{align*}
    (x,y)_z= \frac{1}{2}(|x_2-z_2|+|z_2|+|x^p_1-y_1^p|+|y_2|-|x_2|-|x_1^p-y_1^p|-|y_2|)= 0,
\end{align*}
\begin{align*}
    (x,z)_y= \frac{1}{2}(|x_2|+|x^p_1-y_1^p|+|y_2|+|z_2|+|x_1^p-y_1^p|+|y_2|-|x_2-z_2|)= \rho(y,z),
\end{align*}
\begin{align*}
    (y,z)_x= \frac{1}{2}(|x_2|+|x^p_1-y_1^p|+|y_2|+|x_2-z_2|-|z_2|-|x_1^p-y_1^p|-|y_2|)= d(x,z).
\end{align*}
\end{example} 

\section{Other geometric properties of metrics involving Chebyshev sets}

In this section we consider metric spaces introduced in Definition \ref{cm}. The following result shows how a midpoint mapping can be defined in this general case. 

\begin{theorem}
Let $(X,d)$ be a metric space as in Definition \ref{cm} and let $x,y \in X$. A midpoint mapping for such a metric space can be defined as follows: 
\begin{itemize}
\item[(a)] if points $x, x^p,y$ are collinear and $x^p=y^p$, then 
\begin{align*}
m(x,y)=\frac{x+y}{2},
\end{align*}
\item[(b)] if points $x,x^p,y$ are not collinear, then 
\begin{equation}\label{xw}
m(x,y)=x+\frac{d(x,y)}{2\|x-x^p\|}(x^p-x),
\end{equation}
under the assumption that $\|x-x^p\|\geqslant \frac{1}2{(d(x,y))}$, or 
\begin{equation}\label{yw}
m(x,y)=y+\frac{d(x,y)}{2\|y-y^p\|}(y^p-y),
\end{equation}
under the assumption that $\|y-y^p\|\geqslant \frac{1}2{(d(x,y))}$;
in the case when $\|x-x^p\|<\frac{1}2{(d(x,y))}$ and $\|y-y^p\|<\frac{1}2{(d(x,y))}$, a point $m(x,y)\in C$ and it satisfies the following equality: 
\begin{equation}\label{mi}
(x^p,y^p)_{m(x,y)}^C=0.
\end{equation}
Analogously in the case when points $x,x^p,y$ are collinear and $x^p \neq y^p$.
\end{itemize}
\end{theorem}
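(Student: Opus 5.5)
The plan is to verify, in each case of the statement, the two equalities $d(m,x)=d(m,y)=\tfrac12 d(x,y)$ of Definition~\ref{mm}. Everything will be computed directly from formula \eqref{dcc}. The one structural fact I will use throughout is the following: if $w=x+t(x^p-x)$ with $t\in[0,1]$, then $w^p=x^p$. This holds because $x^p$ is the best approximation to $x$ from $C$, so $\|w-x^p\|\le\|w-c\|$ for every $c\in C$; together with the Chebyshev (uniqueness) property this gives $w^p=x^p$. Consequently $x,w,w^p$ are collinear, and by \eqref{cl}, $d(x,w)=\|x-w\|$.

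\textbf{Case (a).} Here $x^p=y^p$ and $x,x^p,y$ are collinear, so $d(x,y)=\|x-y\|$. Put $m=\frac{x+y}{2}$; it lies on the segment $[x,y]$, which is contained in a ray from $x^p$. I will show $m^p=x^p$. If $x$ and $y$ lie on the same ray from $x^p$, this follows from the fact above. If they lie on opposite rays, then $x^p$ itself is on the segment and the argument must be adapted; points of $C$ between them should force $x=x^p$ or similar. I will handle this by the same best-approximation argument applied to whichever of $x,y$ is farther from $x^p$. Once $m^p=x^p$ is known, condition \eqref{cl} holds for both pairs $(m,x)$ and $(m,y)$, so $d(m,x)=d(m,y)=\frac12\|x-y\|$.

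\textbf{Case (b), first two subcases.} Assume $\|x-x^p\|\ge\frac12 d(x,y)$ and take $m$ as in \eqref{xw}, i.e.\ $t=\frac{d(x,y)}{2\|x-x^p\|}\in[0,1]$. By the fact above, $m^p=x^p$ and $d(x,m)=t\|x-x^p\|=\frac12 d(x,y)$. Next, $m,x^p,y$ are not collinear unless $m=x^p$, so the second line of \eqref{dcc} applies:
\[
d(m,y)=\|m-x^p\|+d_C(x^p,y^p)+\|y-y^p\|=d(x,y)-\|x-m\|=\tfrac12 d(x,y).
\]
The degenerate case $m=x^p$ needs a separate check: if $y^p\ne x^p$ it falls under the second line of \eqref{dcc}; if $y^p=x^p$ it falls under the first line, and then $\|x^p-y\|=d(x,y)-\|x-x^p\|$ must still hold. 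The subcase \eqref{yw} is symmetric.

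\textbf{Case (b), third subcase.} Suppose both $\|x-x^p\|<\frac12 d(x,y)$ and $\|y-y^p\|<\frac12 d(x,y)$. I will look for $m\in C$, so that $m^p=m$. Generically $d(x,m)=\|x-x^p\|+d_C(x^p,m)$, and similarly for $y$. The requirements then become
\[
d_C(x^p,m)=\tfrac12 d(x,y)-\|x-x^p\|>0,\qquad d_C(m,y^p)=\tfrac12 d(x,y)-\|y-y^p\|>0,
\]
and these two targets sum to $d_C(x^p,y^p)$. This sum identity is equivalent to $d_C(x^p,m)+d_C(m,y^p)=d_C(x^p,y^p)$, which is exactly \eqref{mi}. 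So the claim amounts to saying that $m$ is a point of $C$ on a $d_C$-geodesic from $x^p$ to $y^p$ at the prescribed distance from $x^p$.

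\textbf{Main obstacle.} The hardest step is the existence of such an $m$. It requires $(C,d_C)$ to be geodesic, for instance hyperconvex as in \cite{bbp}; I would add this as an assumption, or read the theorem as only characterising $m$ via \eqref{mi}. Beyond that, the collinear, $x^p\ne y^p$ case reduces verbatim to the non-collinear one, since \eqref{cl} fails there too. What remains is bookkeeping of degenerate collinear configurations, such as $m$ coinciding with $x^p$ or $y^p$.
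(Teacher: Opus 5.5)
Your argument is correct, but it runs in the opposite direction from the paper's. You prove sufficiency directly. The one lemma you need is that the metric projection is constant on $[x,x^p]$: for all $c\in C$, $\|w-c\|\ge\|x-c\|-\|x-w\|\ge(1-t)\|x-x^p\|=\|w-x^p\|$, and uniqueness of best approximations then gives $w^p=x^p$. After that, $d(m,x)$ and $d(m,y)$ are read off from \eqref{dcc}, including the degenerate case $t=1$.

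The paper argues mainly by necessity. It takes an arbitrary midpoint $m$, rules out $m^p\neq x^p$ to place $m$ on $[x,x^p]$, solves for $t$, and then computes $d(m,y)$ while silently using $m^p=x^p$, which is exactly your lemma. In (a) it calls the verification clear and argues uniqueness instead. In the last subcase it asserts without proof that $m\in C$ and extracts \eqref{mi} from the two midpoint equations. So the paper's route aims at locating the midpoint and showing it is unique, which ``a midpoint mapping can be defined as follows'' does not require. Yours establishes precisely what is needed.

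Your analysis of the last subcase is also sharper than the statement. Condition \eqref{mi} alone does not make $m$ a midpoint, since $m=x^p$ satisfies it; you also need the prescribed value $d_C(x^p,m)=\tfrac12 d(x,y)-\|x-x^p\|$. Existence genuinely requires an extra hypothesis such as $(C,d_C)$ being geodesic. For example, take $C$ the horizontal axis of $\mathbb R^2$ with the discrete metric: the points $x=(0,0.1)$ and $y=(1,0.1)$ have no midpoint at all.

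If you also want the necessity claim $m\in C$, it follows from your computations. If \eqref{cl} fails for both pairs, then $d(m,x)+d(m,y)\ge d(x,y)+2\|m-m^p\|$, which forces $m\in C$. If \eqref{cl} holds for $(m,x)$ with $m\notin C$, then $d(m,y)=\tfrac12 d(x,y)$ would force $\|m-x^p\|=\tfrac12\bigl(\|x-x^p\|-d_C(x^p,y^p)-\|y-y^p\|\bigr)<0$, which is impossible.

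Two minor corrections:
\begin{itemize}
\item The opposite-ray configuration in (a) genuinely occurs (e.g.\ $y=-x$ for the radial metric), so nothing forces $x=x^p$ there. Your fallback, applying the lemma to the endpoint farther from $x^p$ and using $y^p=x^p$, is exactly right.
\item In the last subcase, $d(x,m)=\|x-x^p\|+d_C(x^p,m)$ holds for every $m\in C$, not just generically.
\end{itemize}
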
 

\begin{proof} 
\begin{itemize}
\item[(a)]
In this case it is clear that $m(x,y)$ is a midpoint of the segment joining the points $x,y$, that is $m(x,y)=\frac{x+y}{2}$. Note the $m(x,y)$ is uniquely determined, because if were $m^p \neq x^p$, then we would have:       
\[
\begin{cases}
||m-m^p||+d_C(m^p,x^p)+||x-x^p||=\frac{1}{2}||x-y||\\
||m-m^p||+d_C(m^p,y^p)+||y-y^p||=\frac{1}{2}||x-y||
\end{cases}
\]
so 
\[
||x-x^p||=||y-x^p||.
\]
It would imply that $x=y$, which is not possible, because $x\neq y$. Thus $x^p=m^p$ and 
\[
\begin{cases}
||m-x||=\frac{1}{2}||x-y||, \\
||m-y||=\frac{1}{2}||x-y||. 
\end{cases}
\]
Hence 
\[
||x-y||=||m-x||+||m-y|| 
\]
and the point $m$ is the midpoint of the segment joining the points $x$ and $y$. 
\item[(b)]
Assume now that the points $x,x^p,y$ are not collinear and $\|x-x^p\|\geqslant \frac{1}{2}d(x,y)$. Let us notice that a point $m$ has to belong to the segment joining the points $x$ and $x^p$, because otherwise we would have $x^p \neq m^p$ and 
\[
\begin{cases}
\|m-m^p\|+d_C(m^p,x^p)+\|x-x^p\|=\frac{1}{2}(\|x-x^p\|+d_C(x^p,y^p)+\|y-y^p\|)\\
\|m-m^p\|+d_C(m^p,y^p)+\|y-y^p\|=\frac{1}{2}(\|x-x^p\|+d_C(x^p,y^p)+\|y-y^p\|).\\
\end{cases}
\]
Then 
\[
d_C(x^p,y^p)=d_C(m^p,x^p)+d_C(m^p,y^p)+2\|m-m^p\| \geqslant d_C(x^p, y^p)+2\|m-m^p\|,
\]
so
\[
2\|m-m^p\| \leqslant0 \ \implies \ m=m^p. 
\]
Because 
\[
\|x-x^p\|\geqslant \frac{1}{2}d(x,y)=d(m,x)=\|m-m^p\|+d_C(x^p,m^p)+||x-x^p\|,
\]
so 
\[
d_C(x^p,m^p)=0 \ \implies \ x^p = m^p,
\]
what gives a contradiction. 
The segment $[x,x^p]\subset X$ can be represented as the set of points:
\[
x+t(x^p-x),
\]
where $t\in [0,1]$. Since $m\in [x,x^p]$, the parameter $t$ uniquely determines $m$. Because $d(m,x)=\frac{1}{2}d(x,y)$, we get  
\[
\|m-x\|=\|t(x^p-x)\|=\frac{1}{2}d(x,y),
\]
so
\[
t=\frac{d(x,y)}{2\|x-x^p\|}\in[0,1].
\]
Hence 
\[
m(x,y)=x+\frac{d(x,y)}{2\|x-x^p\|}(x^p-x)
\] 
and
\begin{align*}
&d(m(x,y),y)=d(x+\frac{d(x,y)}{2\|x-x^p\|}(x^p-x),y)=\\
&=\left(1-\frac{d(x,y)}{2\|x-x^p\|}\right)\|x-x^p\|+d_C(x^p,y^p)+\|y-y^p\|=\\
&=\frac{\|x-x^p\|-d_C(x^p,y^p)-\|y-y^p\|}{2\|x-x^p\|}\|x-x^p\|+d_C(x^p,y^p)+\|y-y^p\|=\\
&\frac{1}{2}d(x,y). 
\end{align*}
In the case when $\|y-y^p\|\geqslant \frac{1}{2}d(x,y)$ the proof is analogous.\\
Now, consider the case when $\|x-x^p\|<\frac{1}{2}d(x,y)$ and $\|y-y^p\|<\frac{1}{2}d(x,y)$. Then $m \in C$ and $m=m^p$. Because 
\[
\begin{cases}
\|m-m^p\|+d_C(m^p,x^p)+\|x-x^p\|=\frac{1}{2}d(x,y)\\
\|m-m^p\|+d_C(m^p,y^p)+\|y-y^p\|=\frac{1}{2}d(x,y),\\
\end{cases}
\]
so
\[
\begin{cases}
d_C(m,x^p)=\frac{1}{2}d(x,y)-\|x-x^p\|\\
d_C(m,y^p)=\frac{1}{2}d(x,y)-\|y-y^p\|.\\
\end{cases}
\]
Therefore 
\[
d_C(m,x^p)+d_C(m,y^p)=d_C(x^p,y^p),
\]
which means that \eqref{mi} holds. 
\end{itemize}
\end{proof} 

To illustrate the above result, let us consider $\mathbb R^2$ with the radial metric; then obviously $C=\{(0,0)\}$ and $x^p=y^p$. Immediately from the above result we get the following 

\begin{corollary}
Let points $v_{1}=(x_{1},y_{1}),v_{2}=(x_{2},y_{2})\in \mathbb R^2$ with $x_{1},y_{1},x_{2},y_{2}>0$ be given. A midpoint mapping for the radial metric on $\mathbb R_+^2$ can be defined as follows:
\begin{itemize}
\item[(a)] if the points $v_{1}, v_{2}, (0,0)$ are collinear, then:
\begin{align*}
m(v_{1},v_{2})=\Big(\frac{x_{1}+x_{2}}{2},\frac{y_{1}+y_{2}}{2}\Big);
\end{align*}
\item[(b)] if the points $v_{1}, v_{2}, (0,0)$ are not collinear, then:
\begin{equation}\label{sr}
m(v_{1},v_{2})=\Bigg(\frac{x_2}{2}\Bigg(1-\frac{\rho(v_1,0)}{\rho(v_2,0)}\Bigg), \frac{y_2}{2}\Bigg(1-\frac{\rho(v_1,0)}{\rho(v_2,0)}\Bigg)\Bigg),
\end{equation}
under the assumption that $\rho(v_1,0)<\rho(v_2,0)$ (analogously if $\rho(v_2,0)<\rho(v_1,0)$). \\
In the case when $\rho(v_1,0)=\rho(v_2,0)$:
\[
m(v_{1},v_{2})=(0,0).
\]
\end{itemize}
\end{corollary}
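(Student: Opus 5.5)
The plan is to specialize the preceding theorem on midpoint mappings to the case $X=\mathbb R^2$ with the Euclidean norm, $C=\{(0,0)\}$, and $d_C\equiv 0$ on $C$. By Definition \ref{ram}, the metric of Definition \ref{cm} is then exactly the radial metric. Since $C$ is a singleton, we have $v_1^p=v_2^p=(0,0)$ for all points. Condition \eqref{cl} therefore reduces to the collinearity of $v_1,v_2,(0,0)$, and in the non-collinear case $d(v_1,v_2)=\rho(v_1,0)+\rho(v_2,0)$. Throughout, write $\rho_i=\rho(v_i,0)$; note that $\rho_i>0$ because the coordinates are positive.

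For item (a), $v_1,(0,0),v_2$ are collinear and $v_1^p=v_2^p$. Part (a) of the theorem gives $m(v_1,v_2)=\frac{v_1+v_2}{2}$ at once, which is the stated coordinate formula. Since all coordinates are positive, $v_1$ and $v_2$ lie on the same open ray from the origin. Hence $\frac{v_1+v_2}{2}$ lies on that ray too, and the distances to $v_1$ and $v_2$ are Euclidean. This gives a direct sanity check of the formula.

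For item (b), we are in the non-collinear case of part (b) of the theorem, with $d(v_1,v_2)=\rho_1+\rho_2$. Assume $\rho_1<\rho_2$. Then $\|v_2-v_2^p\|=\rho_2\geqslant\frac12(\rho_1+\rho_2)$, so formula \eqref{yw} applies with $y=v_2$ and $y^p=0$:
\[
m=v_2-\frac{\rho_1+\rho_2}{2\rho_2}\,v_2=\frac{\rho_2-\rho_1}{2\rho_2}\,v_2=\frac{v_2}{2}\Bigl(1-\frac{\rho_1}{\rho_2}\Bigr),
\]
which is \eqref{sr} written coordinatewise. The remaining subcase of the theorem, where both $\rho_i<\frac12 d(v_1,v_2)$, never occurs here, because $\max\{\rho_1,\rho_2\}\geqslant\frac12(\rho_1+\rho_2)$. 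The symmetric case $\rho_2<\rho_1$ is handled the same way using \eqref{xw}. When $\rho_1=\rho_2$, the factor $1-\rho_1/\rho_2$ vanishes, so the same formula gives $m=(0,0)$. One can also check this directly: $d((0,0),v_i)=\rho_i=\frac12(\rho_1+\rho_2)$.

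The only delicate step is confirming that the chosen branch of the theorem is legitimate and that the resulting point really is a midpoint. Since $m$ lies on the segment $[0,v_2]$, it is collinear with $v_2$ and the origin. This gives $d(m,v_2)=\rho_2-\|m\|=\frac12(\rho_1+\rho_2)$. On the other hand, $m$ is not collinear with $v_1$ and the origin unless $m=0$, and in either case $d(m,v_1)=\|m\|+\rho_1=\frac12(\rho_1+\rho_2)$. I expect this verification to be routine; the main care needed is just the bookkeeping of which collinearity case of the radial metric applies to the pair $(m,v_1)$.
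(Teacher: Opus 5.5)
Your proposal is correct and takes the same route as the paper. The paper derives the corollary immediately from the preceding midpoint theorem with $C=\{(0,0)\}$: then $v_i^p=(0,0)$, condition \eqref{cl} reduces to collinearity with the origin, and the branch \eqref{yw} (applicable since $\rho(v_2,0)\geqslant\frac12 d(v_1,v_2)$) yields \eqref{sr}. Your extra direct check that $d(m,v_1)=d(m,v_2)=\frac12\bigl(\rho(v_1,0)+\rho(v_2,0)\bigr)$ is a useful addition, since it makes the argument independent of the rather informal proof of the general theorem.
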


\end{document}